\let\oldtocsection=\tocsection
\let\oldtocsubsection=\tocsubsection
\let\oldtocsubsubsection=\tocsubsubsection
\renewcommand{\tocsection}[3]{\hspace{0em}\oldtocsection{#1}{#2}{#3}}
\renewcommand{\tocsubsection}[3]{ \hspace{1em} \oldtocsubsection{#1}{\small{#2}}{\small{#3}} }
\renewcommand{\tocsubsubsection}[3]{\hspace{2em}\oldtocsubsubsection{#1}{\small{#2}}{\small{#3}}}
\newcommand{\al}[1]{\begin{align*}#1\end{align*}}
\newcommand{\qquotes}[1]{``{#1}''}
\newcommand{\set}[1]{\left\{{#1}\right\}}
\newcommand\xyhook{\ar@{^{(}->}}
\newcommand\isoto{\stackrel{\sim}{\To}}
\newcommand\into{\hookrightarrow}
\newcommand\To{\longrightarrow}
\newcommand\Hom{\operatorname{Hom}}
\renewcommand\hom{\mathcal{H}om }
\DeclareMathOperator\Ext{Ext}
\newcommand\RHom{\operatorname{RHom}}
\renewcommand\P{\mathbb P}
\newcommand\Gr{\operatorname{Gr}}
\newcommand\LGr{\operatorname{LGr}}
\newcommand{\Pf}{\mathbbm{Pf}}
\newcommand\pt{\operatorname{pt}}
\newcommand\rk{\operatorname{rank}}
\newcommand\GL{\operatorname{GL}}
\newcommand\SL{\operatorname{SL}}
\newcommand\Sp{\operatorname{Sp}}
\newcommand\C{\mathbb C}
\newcommand{\cA}{\mathcal{A}}
\newcommand{\cE}{\mathcal{E}}
\newcommand\cF{\mathcal F}
\newcommand{\cG}{\mathcal{G}}
\newcommand{\cO}{\mathcal{O}}
\newcommand{\cP}{\mathcal{P}}
\newcommand{\cS}{\mathcal{S}}
\newcommand{\cT}{\mathcal T}
\newcommand{\cX}{\mathcal{X}}
\newcommand{\Sym}{\operatorname{Sym}\!}
\newcommand{\Wedge}{\mbox{\scalebox{1.3}{$\wedge$}}}
\newcommand{\Crit}{\operatorname{Crit}}
\newcommand{\stack}[2]{\left[\, #1 \, \big/ \, #2\, \right]}
\newcommand\Br{\mathsf{BBr}}
\newcommand\dual{\vee}
\makeatletter \@addtoreset{equation}{section} \makeatother
\theoremstyle{plain}
\newtheorem{prop}[equation]{Proposition}
\newtheorem{thm}[equation]{Theorem}
\newtheorem{lem}[equation]{Lemma}
\newtheorem{keythm}{Theorem}
\theoremstyle{remark}
\newtheorem{rem}[equation]{Remark}
\theoremstyle{definition}
\newcommand{\pgap}{\vspace{7pt}}
\newcommand{\marginparstretch}{0.6}
\let\oldmarginpar\marginpar
\renewcommand\marginpar[1]{\-\oldmarginpar[\framebox{\setstretch{\marginparstretch}\begin{minipage}{\marginparwidth}{\raggedleft\scriptsize #1}\end{minipage}}]{\framebox{\setstretch{\marginparstretch}\begin{minipage}{\marginparwidth}{\raggedright\scriptsize #1}\end{minipage}}}}
\newcommand\I{\mathcal I}
\newcommand\PP[1]{\mathbb{P}^{\hspace{.8pt}#1}}
\newcommand{\Into}{\ensuremath{\lhook\joinrel\relbar\joinrel\rightarrow}}
\newcommand\Mapsto{\ensuremath{\shortmid\joinrel\relbar\joinrel\rightarrow}}
\newcommand\Langle{\big\langle}
\newcommand\Rangle{\big\rangle}
\renewcommand\Gr{\operatorname{Gr}}
\renewcommand\Pf{\operatorname{Pf}}
\renewcommand\pt{\operatorname{pt}}
\renewcommand\Hom{\operatorname{Hom}}
\renewcommand\Ext{\operatorname{Ext}}
\newcommand\ext{\mathcal{E}xt}
\newcommand\beq[1]{\begin{equation}\label{#1}}
\newcommand\eeq{\end{equation}}
\title{Quintic threefolds and Fano elevenfolds}
\author[E. Segal and R. P. Thomas]{Ed Segal and Richard P. Thomas}
\begin{document}

\begin{abstract} \noindent
The derived category of coherent sheaves on a general quintic threefold is a central object in mirror symmetry. We show that it can be embedded into the derived category of a certain Fano elevenfold. Our proof also generates related examples in different dimensions.
\end{abstract}
\maketitle
\tableofcontents

%%%%%%%%%%%%%%%%%%%%%%%%%%%%%%%%%%%%%%%%%%%%%%%%%%%%%%%%%%%%%%%%%%%%%%%%%%%

\section{Introduction}\label{sec:intro}

Fix a 10-dimensional vector space $V\cong\C^{10}$. Consider the Grassmannian
\beq{Gr}
\Gr:=\Gr(2,V)\ \subset\ \P\big(\Wedge^2V\big)
\eeq
and the Pfaffian variety
\beq{Pf}
\Pf:=\Pf_{10}=\big\{[\omega]\in\P (\Wedge^2V^\dual):\omega^{\wedge 5}=0\in\Wedge^{10}V^\dual\big\}\ \subset\ \P\big(\Wedge^2V^\dual\big).
\eeq
Notice that \eqref{Pf} is a quintic hypersurface in $\PP{44}$, singular in codimension 5. It is the classical projective dual of \eqref{Gr}. Now pick a 5-dimensional subspace
\beq{Udef}
\C^5 \cong U\subset\Wedge^2 V^\dual
\eeq
or equivalently a 40-dimensional subspace
$$
\C^{40}\cong U^\perp \subset \Wedge^2 V.
$$

We intersect \eqref{Gr} with $\P (U^\perp)\cong\PP{39}$ and \eqref{Pf} with $\P(U)\cong\PP{4}$. This defines an 11-dimensional linear section of the Grassmannian
\beq{Y1}
Y_1:=\P (U^\perp)\cap\Gr
\eeq
and a quintic 3-fold
\beq{Y2}
Y_2:=\P (U)\cap\Pf
\eeq
respectively.
For a generic choice of $U$, both $Y_1$ and $Y_2$ are smooth. Conversely, Beauville \cite[Proposition 8.9]{Beau} shows that the \emph{general} smooth quintic threefold $Y_2\subset\PP{4}$ arises in this way.\footnote{Though not uniquely. Different presentations of a given $Y_2$ as linear sections of $\Pf$ give rise to different dual Fano elevenfolds $Y_1$.} 

The moduli space of the Fanos $Y_1$ is a generically-finite cover of the moduli space of the quintics $Y_2$; in particular
$$
h^1(T_{Y_1})=101=h^1(T_{Y_2}).
$$
Moreover their cohomologies are as closely related as possible. By the Lefschetz hyperplane theorem, in degrees less than the middle, the cohomologies of $Y_1$ and $Y_2$ are the same as those of their ambient spaces $\Gr$ and $\PP{5}$ respectively. The same is true in degrees higher than the middle after a shift by twice the codimension. Finally in the middle degree, the nonzero pieces of the cohomologies have the same dimensions:
$$ h^{3,0}\ \ h^{2,1}\ \ h^{1,2}\ \ h^{0,3} \quad=\quad 1\ \ 101\ \ 101\ \ 1 $$
for $H^3(Y_2)$ and
$$ h^{7,4}\ \ h^{6,5}\ \ h^{5,6}\ \ h^{4,7} \quad=\quad 1\ \ 101\ \ 101\ \ 1 $$
for $H^{1\hspace{-.5pt}1}(Y_1)$. 
Our main result categorifies this relation.

\begin{keythm} \label{main}
There is a full and faithful embedding $D^b(Y_2)\into D^b(Y_1)$.
\end{keythm}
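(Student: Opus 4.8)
\noindent The plan is to deduce the embedding from homological projective duality for the Grassmannian $\Gr=\Gr(2,V)$ in its Pl\"ucker embedding $\Gr\subset\P(\Wedge^2V)$, whose classical projective dual, as recalled above, is the Pfaffian quintic $\Pf\subset\P(\Wedge^2V^\dual)$. Two ingredients are needed. First, a Lefschetz decomposition of $D^b(\Gr)$ with respect to $\cO_{\Gr}(1)$,
\[
D^b(\Gr)=\big\langle\,\cA_0,\ \cA_1(1),\ \dots,\ \cA_9(9)\,\big\rangle,\qquad \cA_0\supseteq\cA_1\supseteq\cdots\supseteq\cA_9,
\]
of length $10$ (the Fano index of $\Gr$); following Kuznetsov and Fonarev I would take the minimal one, whose first five blocks have rank $5$ and last five have rank $4$, so $\sum_j\rk\cA_j=45=\rk K_0(\Gr)$. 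Second, the homological projective dual $\cC$ of $\Gr$: a categorical (noncommutative) resolution of $D^b(\Pf)$, carrying a dual Lefschetz structure and restricting to the honest derived category over the smooth locus $\Pf\setminus\{\omega:\rk\omega\le 6\}$.

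The crux is the duality statement for mutual linear sections: for a subspace $L\subseteq\Wedge^2V^\dual$ of dimension $r$, with $L^\perp\subseteq\Wedge^2V$ of codimension $r$, and $\cC_L$ the restriction of $\cC$ to $\P(L)$, one wants a semiorthogonal decomposition
\[
D^b\big(\Gr\cap\P(L^\perp)\big)=\big\langle\,\cC_L,\ \cA_r(1),\ \cA_{r+1}(2),\ \dots,\ \cA_9(10-r)\,\big\rangle,
\]
in which $\cC_L\into D^b(\Gr\cap\P(L^\perp))$ is a Fourier--Mukai functor, its kernel cut out of the universal incidence $\{(\Lambda,[\omega]):\omega|_\Lambda=0\}\subseteq\Gr\times\P(\Wedge^2V^\dual)$ by the two linear sections. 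Granting this, I specialise to $L=U$, so $r=5$ and $\Gr\cap\P(L^\perp)=Y_1$. For generic $U$ the section $\Pf\cap\P(U)=Y_2$ is a smooth quintic $3$-fold avoiding $\{\rk\omega\le 6\}$, whose codimension $6$ in $\PP{44}$ exceeds $\dim\P(U)=4$; hence $\cC$ is already an equivalence there, and since the dual Lefschetz length $35=45-10$ is $\le 40=45-5$ no further correction term appears, so $\cC_U\iso D^b(Y_2)$. Therefore
\[
D^b(Y_1)=\big\langle\,D^b(Y_2),\ \cA_5(1),\ \cA_6(2),\ \cA_7(3),\ \cA_8(4),\ \cA_9(5)\,\big\rangle,
\]
exhibiting $D^b(Y_2)$ as a semiorthogonal summand, hence a full and faithful subcategory, of $D^b(Y_1)$; the five right-hand blocks, of rank $4$ apiece, carry the remaining (purely even, Grassmannian-inherited) cohomology of $Y_1$, consistently with the cohomology relation of the introduction.

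The main obstacle is the displayed homological projective duality statement: Grassmannian--Pfaffian duality is proved in the literature only for $\dim V\le 7$, so for $\dim V=10$ it must be established in the form above. Rather than pushing through Kuznetsov's Lefschetz-collection method, I would argue via the gauged linear sigma model underlying the Pfaffian--Grassmannian correspondence: $\Gr$ and $\cC$ arise as the two GIT phases of a single $\GL_2$-gauged theory, ``window'' subcategories compare their derived categories by a wall-crossing argument uniform in $\dim V$, and the linear section $L$ is built in through the tautological superpotential it defines. The remaining points --- that the chosen resolution of $\Pf$ genuinely base-changes to $D^b(Y_2)$ over the smooth section, and that the blocks $\cA_5(1),\dots,\cA_9(5)$ remain semiorthogonal and nonzero after restriction to $Y_1$ (a Lefschetz-type transversality check) --- I expect to be routine once the duality is in hand. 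Running the same construction for $\Gr(2,\C^n)$ with other $n$ and other codimensions $r$ should then produce the promised family of analogous embeddings in assorted dimensions.
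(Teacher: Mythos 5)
Your outline and the paper's proof share the same core philosophy: a two-phase gauged linear sigma model (Grassmannian side and Pfaffian side) compared via grade-restriction window subcategories of matrix factorizations, following Addington--Donovan--Segal. In that sense you have the right tool. But your plan has two concrete gaps when measured against what the paper actually proves.

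First, you overshoot. You aim to prove full homological projective duality for $\Gr(2,10)$, i.e.\ the complete semiorthogonal decomposition
$D^b(Y_1)=\langle D^b(Y_2),\cA_5(1),\ldots,\cA_9(5)\rangle$.
The GLSM window argument, as executed in the paper, does \emph{not} yield this. What the windows give is that the Pfaffian-side window $\cG_2^W$ sits inside the Grassmannian-side window $\cG_1^W\iso D^b(Y_1)$ as a right-admissible subcategory, and separately that $D^b(Y_2)$ embeds into $\Br(X_2,W)=\iota_2^*(\cG_2^W)$. Nothing in the wall-crossing machinery tells you that the Pfaffian window \emph{generates} $D^b(X_2,W)$, nor that $D^b(Y_2)$ exhausts $\Br(X_2,W)$; the paper is explicit (Remark \ref{orthogonal}) that it believes but does not prove the full decomposition. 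So the clean SOD you write down is, at this level, still a conjecture; the windows only hand you the admissible embedding, which is enough for Theorem \ref{main} but less than you claim.

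Second, you dismiss as ``routine'' the point that the resolution base-changes to $D^b(Y_2)$ over a smooth linear section. This is exactly where a Brauer class lurks, and where the paper's only genuinely new technical work occurs. Over $Y_2$ the Pfaffian-side fibre is a vector bundle $\Hom(S,\widehat V)$ equipped with a nondegenerate quadratic form up to scale, and Kn\"orrer periodicity in families a priori identifies the matrix factorization category with $D^b(Y_2,\alpha)$ for some Brauer class $\alpha$, not $D^b(Y_2)$. You cannot kill $\alpha$ by choosing a maximally isotropic subbundle, because such a subbundle need not exist globally. Section \ref{quadbundles} of the paper instead constructs a canonical, $\Sp$-equivariant exceptional generator $\cE$ by pushing forward from the Lagrangian Grassmannian bundle $\LGr(S)$; this both shows $\alpha=0$ and provides the global Fourier--Mukai kernel $j_*q^*\cE$ needed to define the embedding $D^b(Y_2)\Into\Br(X_2,W)$. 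Your proposal contains no substitute for this step, and it is not something the earlier Pfaffian--Grassmannian paper supplies either.

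A smaller remark: framing the argument around a hypothetical noncommutative resolution $\cC$ of $D^b(\Pf)$ is unnecessary once you have committed to sections $\P(U)$ that miss $\mathrm{Sing}(\Pf)$. The paper exploits this by never constructing such a resolution; it works directly with $D^b(X_2,W)$, which is why the method succeeds in a regime where the HPD conjecture for $\Gr(2,n)$ remains open.
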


In fact this is a special case of a more general result, Theorem \ref{maingeneral} below, which also covers some other interesting examples.

Theorem \ref{main} should have various consequences when combined with mirror symmetry. In particular, the Fukaya categories of $Y_1$ and $Y_2$ should also be related after a rescaling of the Novikov parameter $q$, with the latter a summand of the former. Taking Hochschild cohomologies, we should find that the quantum cohomology ring $QH^*(Y_2)$ of $Y_2$ should be a summand of $QH^*(Y_1)$ after applying a rescaling of the quantum parameter $q$. Setting $q=0$ would recover the embedding of the Hodge diamond of $Y_2$ into that of $Y_1$ alluded to above. \pgap

Theorem \ref{main} would follow directly from Kuznetsov's beautiful work on homological projective duality \cite{HPD, KuGr} if one could prove \cite[Conjecture 5]{KuGr} for Gr(2,10). In short, Kuznetsov conjectures that \eqref{Gr} and \eqref{Pf} should be homologically projectively dual varieties \cite{HPD} once one replaces Pf with an appropriate categorical crepant resolution of its singularities (which has so far only been found in lower dimensions). This would imply a relation between the derived categories of the linear sections of Gr and of the orthogonal linear sections of Pf. In particular, for $\P(U)$ chosen to avoid the singularities of Pf, we would find that $D^b(Y_1)$ has a semi-orthogonal decomposition
\beq{YX}
D^b(Y_1)=\Langle\cA,\cA(1),\ldots,\cA(4),D^b(Y_2)\Rangle,
\eeq
where $\cA$ is the category generated by the exceptional collection
$$
\set{\Sym^3S,\,\Sym^2S,\,S,\,\cO} \quad\mathrm{on}\ Y_1
$$
and $S$ is the (restriction to $Y_1$ of the) universal subbundle on Gr.
That is,
\al{
D^b(Y_2)\,&\cong{}^\perp\big\langle \Sym^i S (j),\ 0\le i\le3,\ 0\le j\le 4\big\rangle \\
&=\Big\{E\in D^b(Y):\RHom_X(E,\Sym^i S(j))=0\text{ for all }0\le i\le3,0\le j\le 4\Big\}.
}
We expect furthermore that, up to some mutations and twists by line bundles, the inclusion $D^b(Y_1)\into D^b(Y_2)$ should be given by the Fourier-Mukai kernel $\I_\Gamma$, the ideal sheaf of
$$
\Gamma:=\Big\{(\phi,P)\in\Pf\times\Gr\,:\,\ker\phi\cap P\ne0\Big\}\subset Y_1\times Y_2.
$$
Here $\ker\phi\subset V$ denotes the kernel of $\phi\in\Wedge^2V^\dual$ when thought of as a (skew) linear map $V\to V^\dual$. The correspondence $\Gamma$ associates $\phi$ to the locus of 2-planes $P\subset V$ which intersect $\ker\phi$ nontrivially.
\pgap

Since we deliberately avoid the singularities of $\Pf$ the methods of \cite{HPD, KuzECs, KuGr} are surely strong enough to prove Theorem \ref{main} without finding the right categorical resolution of singularities of $\Pf$. Here however we take a different approach, inspired by string theory. 

In their paper \cite{HoriTong}, Hori and Tong wrote down a non-abelian gauged linear sigma model (GLSM) that gave a physical explanation of the so-called `Pfaffian-Grassmannian' derived equivalence between two particular Calabi-Yau threefolds. The paper \cite{ADS}
%(by Addington, Donovan, and the first-named author)
gave a mathematical treatment of Hori and Tong's construction at the level of B-brane categories. 

In this paper we take the techniques and results of \cite{ADS} and apply them to a slightly more general GLSM. This gives us a more general result, Theorem \ref{maingeneral}, which says that we have a derived embedding between certain smooth linear sections of the Pfaffian variety and the dual smooth linear sections of a Grassmannian. Special cases then give the quintic threefold case of Theorem \ref{main}, the Pfaffian-Grassmannian equivalence, and examples with $K3$ surfaces and Calabi-Yau 5-folds.

Although our terminology is different, our approach is intimately connected with homological projective duality; see \cite{Betal} for another situation in which HPD is realised via GLSMs.

This paper is based heavily on \cite{ADS}; the only new technical ingredient is the work in Section \ref{quadbundles} to show the vanishing of a Brauer class. Consequently we have made little attempt to make this paper self-contained, and we refer the reader to \cite{ADS} for background, motivation, references and more detailed explanations.

\smallskip\noindent\textbf{Acknowledgements.} We would like to thank Nick Addington and Will Donovan for allowing us to re-use the arguments of the paper \cite{ADS}. We also thank Nick Addington for useful conversations and for computational help, and Ivan Smith for generous help with the Fukaya category and quantum cohomology. 

Sometime after posting this paper we became aware of the paper \cite{IM} in which our embedding of derived categories was explicitly conjectured.

R.T. was partially supported by EPSRC programme grant EP/G06170X/1.

\section{Geometric setup and statement of theorem}

Fix vector spaces $V, U$ and $S$ of dimensions $n,\,k\leq{ n \choose 2}$ and $2$ respectively, and consider
$$ \cX =\stack{\big(\!\Hom(S, V) \oplus(U\otimes\Wedge^2 S)\big)}{\GL(S)}. $$
The square brackets indicate that we consider this as an Artin stack (rather than a scheme-theoretic or GIT quotient).
We let $x$ and $p$ denote elements of $\Hom(S,V)$ and $U\otimes \Wedge^2 S$ respectively. We have open substacks
\begin{eqnarray*}
&&\iota_1\colon X_1 = \set{ \rk x =2 }\Into \cX, \\
&&\iota_2\colon X_2 = \set{ p \neq 0} \Into \cX.
\end{eqnarray*}
The locus $X_1$ is a variety: the total space of the vector bundle 
$$ \cO(-1)\otimes U \To \Gr(2, V).$$
The locus $X_2$ is still an Artin stack; it is a bundle over $\P(U)$ whose fibres are the `linear' Artin stacks 
\beq{fibreF}
\stack{ \Hom(S, V)}{\SL(S)}.
\eeq
We can rephrase this: we let $\cP$ be the stack 
\beq{cPdef}
\cP =  \stack{ (U\otimes\Wedge^2 S)\setminus \set{0}}{\GL(S)},
\eeq
which is a Zariski-locally trivial bundle of stacks over $\P(U)$, with fibre $B\!\SL_2$. Then $X_2$ is a vector bundle over $\cP$, whose fibres are the vector spaces $\Hom(S, V)$.

The loci $X_1$ and $X_2$ are the semi-stable loci for a positive or negative GIT stability condition, so one of the GIT quotients is $X_1$, and the other is the underlying scheme of $X_2$. 

Now fix a surjective linear map\footnote{The dual $A^\dual\colon U\to\Wedge^2V^\dual$ will later specialise to the injection \eqref{Udef} of the Introduction.}
\beq{A} A\colon \Wedge^2 V \To U^\dual.\eeq
This defines an (invariant) function $W\colon\cX\to \C$ by
\beq{W1}
W =  p \circ A \circ \wedge^2 x.
\eeq
We also use $W$ to denote the restriction of this function to $X_1$ and $X_2$. Finally we fix a $\C^*$ action (an \qquotes{R-charge}) on $\cX$ by giving $x$ weight zero and $p$ weight 2, so that both $X_1$ and $X_2$ are invariant and $W$ has weight 2. Given this data, the three pairs
$$(\cX, W),\quad (X_1, W) \quad \mbox{and} \quad (X_2, W)$$
are all Landau-Ginzburg B-models, as defined in \cite{Ed}, and we have restriction functors 
\beq{restr}
D^b(X_1, W) \stackrel{\iota_1^*}{\longleftarrow} D^b(\cX, W) \stackrel{\iota_2^*}{\To} D^b(X_2, W)
\eeq
between their categories of (global) matrix factorizations.

\begin{rem}\label{meaningofMF} For a Landau-Ginzburg B-model $(X,W)$, the  objects of $D^b(X, W)$ are curved dg-sheaves $(\cE, d_\cE)$, where $\cE$ is a quasi-coherent sheaf on $X$. Strictly speaking we reserve the word \emph{matrix factorization} for the special case in which $\cE$ is a finite-rank vector bundle, but (by definition) every object in $D^b(X,W)$ is equivalent to a matrix factorization. This is discussed in detail in \cite[\S2]{ADS}.
\end{rem}

Now let
$$Y_1 \subset \Gr(2,V)$$
be the zero locus of the section
\beq{sigma}
\sigma:= A \circ \wedge^2 x \,\in\, \Gamma\,\big(\!\Gr(2, V), \cO(1)\otimes U^\dual \big).
\eeq
$Y_1$ is closely related to the critical locus $\Crit(W)$ of the function $W$ on $X_1$. In fact, a point $(x, p)\in X_1$ lies in $\Crit(W)$ iff $\sigma(x)=0$, and $p$ lies in the kernel of the linear map
$$d\sigma : U\otimes \Wedge^2 S \To T_{[x]}^\vee\Gr(2,V) $$
which measures the derivatives of $\sigma$ in the fibre directions (and which is well-defined when $\sigma(x)=0$). Hence we always have $Y_1\subset \Crit(W)$, and we have $Y_1=\Crit(W)$ if and only if the section $\sigma$ is transverse to the zero section,  \emph{i.e.}~if and only if $Y_1$ is  a smooth codimension-$k$ complete intersection. From now on we restrict to generic $A$ for which this is true.

By global Kn\"orrer periodicity \cite[Theorem 3.4]{Shipman}, there is a canonical equivalence
\beq{knorrer}D^b(Y_1)  \isoto D^b(X_1, W). \eeq

This describes the left hand side of \eqref{restr}. The right hand side is more complicated. Over $\P(U)$ we have a family of 2-forms on $V$ up to scale given to us by $A^\dual$ \eqref{A}. The locus where these have rank $<n-1$ is a variety
$$Y_2\,\subset\,\P(U),$$
the intersection of the Pfaffian variety $\Pf\subset \P(\Wedge^2 V^\dual)$ of degenerate two-forms on $V$ with the linear subspace
$$A^\dual\colon \P(U)\Into \P(\Wedge^2 V^\dual). $$
It follows that $Y_2$ is also the locus where the (degenerate) quadratic form $W$ on the fibres \eqref{fibreF} of $X_2\to\P(U)$ drops rank. In this situation there is a more complicated version of Kn\"orrer periodicity; see Sections \ref{quadbundles}, \ref{pfaffianside} and \cite{ADS}. There is also a corresponding Brauer class, but we show this vanishes in Section \ref{quadbundles}.

The singular locus of $\Pf$ is a subvariety of codimension 6 inside  $\P(\Wedge^2 V^\dual)$ when $n$ is even, and codimension 10 when $n$ is odd. Therefore if $n$ is even and $k\leq 6$, or $n$ is odd and $k\leq 10$, then for a generic choice of $A$ the variety $Y_2$ is smooth. (If $k$ is larger than these bounds then $Y_2$ will never be smooth.) Under these assumptions, we prove in Section  \ref{pfaffianside} that $D^b(Y_2)$ embeds into a certain subcategory of $D^b(X_2, W)$.

Note that the variety $Y_1$ is Fano if $k< n$, Calabi-Yau if $k=n$, and general type if $k>n$. The canonical bundle of $Y_2$ is easy to calculate if $n$ is even: $Y_2$ is Fano for $k>n/2$, Calabi-Yau for $k=n/2$ and general type for $k<n/2$. When $n$ is odd the calculation is a little harder, but the three cases occur when $k>n$, $k=n$ and $k<n$ respectively.

\begin{thm}\label{maingeneral} Suppose that 
\begin{itemize}
\item[(i)] $k\le\min(n,10)$ if $n$ is odd, or
\item[(ii)] $k\le\min(n/2,6)$ if $n$ is even.
\end{itemize}
Assume also that $A$ is generic, so that both $Y_1$ and $Y_2$ are smooth. Then we have an admissible embedding
$$D^b(Y_2) \Into D^b(Y_1).$$
\end{thm}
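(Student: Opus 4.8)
The plan is to compare the two restriction functors in \eqref{restr} via the intermediate category $D^b(\cX, W)$, exactly as in \cite{ADS}, but now in the more general numerical range allowed by hypotheses (i)--(ii). First I would analyze the left-hand functor $\iota_1^*\colon D^b(\cX,W)\to D^b(X_1,W)$. Composing with the Kn\"orrer equivalence \eqref{knorrer}, the target is $D^b(Y_1)$, so the real content is to understand $D^b(\cX, W)$ and the kernel of $\iota_1^*$. Here one uses the window/grade-restriction machinery for GIT quotients of LG B-models: the category $D^b(\cX, W)$ is generated by $D^b(X_1,W)$ together with copies of the ``extra'' pieces coming from the unstable locus $\rk x \le 1$, and these extra pieces are controlled by the weights of $\GL(S)$ acting on the complement. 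Since $x$ has $\GL(S)$-weight given by the standard representation $S$ and $p$ has weight in $\Wedge^2 S$, one computes that the relevant window has width governed by the codimension $k$, and the hypothesis $k\le n$ (resp.\ $k\le n/2$) is precisely what makes the window large enough to pin down a semiorthogonal decomposition of $D^b(\cX,W)$ refining $\iota_1^*$.

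Next I would analyze the right-hand functor $\iota_2^*\colon D^b(\cX, W)\to D^b(X_2,W)$. Over $\cP$, the function $W$ restricts to a quadratic form on the fibres $\Hom(S,V)$ which is generically nondegenerate and drops rank exactly along $Y_2\subset\P(U)$. This is the setting of quadratic-bundle Kn\"orrer periodicity: $D^b(X_2, W)$ should be equivalent to the derived category of the (possibly twisted) double cover, or of a sheaf of Clifford/even-Clifford algebras, over $\P(U)$, which restricted over $Y_2$ degenerates and produces a copy of $D^b(Y_2)$ (twisted by a Brauer class) as a summand. The key point, and the only genuinely new technical input of the paper, is Section \ref{quadbundles}: one must show the associated Brauer class on $Y_2$ vanishes, so that the summand is honestly $D^b(Y_2)$ rather than a twisted form. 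The numerical hypotheses re-enter here: smoothness of $Y_2$ requires $\P(U)$ to miss the singular locus of $\Pf$, which has codimension $6$ if $n$ is even and $10$ if $n$ is odd, giving the bounds $k\le 6$ and $k\le 10$ respectively.

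Assembling the two sides, one obtains a diagram of functors out of $D^b(\cX, W)$ whose two legs are $\iota_1^*$ and $\iota_2^*$, and the combinatorics of the windows shows that the ``Pfaffian-side'' summand $D^b(Y_2)\subset D^b(X_2,W)$ lifts to $D^b(\cX,W)$ and then maps under $\iota_1^*$ to a full, faithful, and admissible subcategory of $D^b(X_1,W)\cong D^b(Y_1)$; admissibility follows because the complementary pieces $\cA, \cA(1),\dots$ in the expected decomposition \eqref{YX} give the left and right adjoints. I expect the main obstacle to be the Brauer-class vanishing in Section \ref{quadbundles}: while the GLSM and window arguments are essentially those of \cite{ADS} run with different numbers, controlling the quadratic bundle over $Y_2$ and showing its even Clifford algebra splits — as opposed to merely its Morita class being trivial generically — is the step that does not come for free from the earlier literature, and it is what forces the case analysis on the parity of $n$.
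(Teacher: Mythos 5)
Your high-level strategy matches the paper: work through the intermediate category $D^b(\cX,W)$, use Kn\"orrer periodicity to identify $D^b(X_1,W)$ with $D^b(Y_1)$, use grade-restriction windows to compare $\iota_1^*$ and $\iota_2^*$, and then embed $D^b(Y_2)$ into $D^b(X_2,W)$ via a fibrewise-quadratic version of Kn\"orrer periodicity. You also correctly locate the two sources of the numerical hypotheses (the containment $\cT\subset\cS$, i.e.\ $\cG_2^W\subset\cG_1^W$, which needs $k\le n$ or $k\le n/2$; and smoothness of $Y_2$, which needs $k$ at most the codimension of the singular locus of $\Pf$), and you correctly single out the Brauer-class vanishing as the only genuinely new technical ingredient.

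Where you diverge from the paper, and where your sketch is incomplete, is precisely at that key step. You propose handling the Pfaffian side via even Clifford algebras and proving the Clifford sheaf splits --- that is the Kuznetsov-style HPD route, which the paper deliberately avoids. What the paper actually does (Section~\ref{quadbundles}) is construct a \emph{canonical} exceptional generator $\cE_B$ of the fibrewise matrix-factorization category: pull back to $\LGr(S)\times_B\Hom(S,V)$, take the structure sheaf $\cO_N$ of the tautological maximally-isotropic subbundle $N=\Hom(S/\Lambda,V)$, twist by $(\det\Lambda)^{-\frac12\rk V}$, and push forward. Because this object is defined globally, without choosing a Lagrangian subbundle of $V$, it gives an honest Fourier--Mukai kernel over $Y_2$ and an equivalence with zero Brauer class; one only needs the existence of a local $M$ (which uses that $\Sp$ is special, so $V$ is Zariski-locally trivial) to check the kernel is invertible. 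This is a more hands-on construction than ``show the even Clifford algebra splits,'' and it is the device that actually closes the gap you flagged. Two smaller points of disagreement with the paper: the parity case split is \emph{not} forced by the Brauer argument (Section~\ref{quadbundles} is uniform in $n$); it comes from the different shapes of Kuznetsov's exceptional collection $\cS$ on $\Gr(2,n)$ for $n$ odd vs.\ even, and from the codimension of $\mathrm{Sing}(\Pf)$. And admissibility does not come from the (unproven) full decomposition \eqref{YX}; it comes from Proposition~\ref{windowadjoint}, which exhibits $\iota_2^*$ as the right adjoint to the window inclusion $\cG_2^W\subset\cG_1^W$, combined with the fact that $F^R\circ F=\mathrm{id}$.
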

Here \emph{admissible} means that the embedding admits a right adjoint.\footnote{By Serre duality the existence of a left adjoint is equivalent to the existence of a right adjoint.} It follows that $D^b(Y_1)$ has a semi-orthogonal decomposition whose last term is $D^b(Y_2)$; c.f.~\eqref{YX}.  See Remark \ref{orthogonal} for some discussion of the orthogonal complement to our embedding.

Setting $n=10,\,k=5$ gives Theorem \ref{main} of the Introduction. The case $n=k=7$ is the `Pfaffian-Grassmannian' equivalence, which is the subject of \cite{ADS}.

Setting $n=8,\,k=4$ gives an embedding of the derived category of a Pfaffian quartic K3 into the derived category of a codimension-4 linear section of $\Gr(2,8)$. Note that the \emph{general} quartic in $\PP{3}$ is Pfaffian \cite[Prop.~7.6]{Beau}.

Setting $n=k=9$ we get a novel derived equivalence between Calabi-Yau 5-folds.

\section{Grade-restriction windows}

Recall that $n= \dim V$, and let us set  $L=\tfrac{n-1}{2}$ for $n$ odd and $L=\tfrac{n}{2}$ for $n$ even.  Let $\cS$ be the following set of representations of $\GL(S)$:
$$ \cS = \set{ \Sym^l S \otimes (\det S)^m\ :\ l\in \big[0, L \big), \; m\in [0, n) }$$
if $n$ is odd, or
$$\cS = \set{ \Sym^l S \otimes (\det S)^m\,:\,l\in \big[0, L-2\big], \; m\in [0, n)  \,\mathrm{\ or\ }\, l=L-1,\; m\in\big[0,\tfrac n2\big)}
$$
if $n$ is even. Each representation induces a vector bundle on $\cX$ which we denote by the same letters. 

On restriction to $\Gr(2, V)\subset X_1$ we get a set of vector bundles which is very nearly the full strong exceptional collection found by Kuznetsov in \cite{KuzECs}, we have just replaced all bundles by their duals. So by Serre duality, our set is also a full strong exceptional collection. His collection is Lefschetz; ours is `dual Lefschetz' in that the blocks get bigger rather than smaller as one twists by $\cO(1)$. In HPD this would have the effect of swapping left and right in all resulting semi-orthogonal decompositions.

 The set $\cS$ is adapted to the `Grassmannian side' of our set-up; for the `Pfaffian side' we consider the set
\beq{T}\cT = \set{ \Sym^l S\otimes (\det S)^m\ :\ l\in [0, L), \; m\in [0, k) },\eeq
where $k=\dim U$ as before.  Notice that
\beq{numcond} 
\cT\subset\cS \mathrm{\ \ if\ and\ only\ if\ }k\leq n \mbox{ for $n$ odd, or }k\leq \tfrac{n}{2} \mbox{ for $n$ even.}
\eeq
(There is also a `reverse' numerical condition that implies that $\cS\subset \cT$, but this is less useful to us.) We let
$$\cG_1 = \left\langle \cS  \right\rangle \mbox{ and } \cG_2 = \left\langle \cT  \right\rangle \ \subset D^b(\cX)
$$
be the subcategories of $D^b(\cX)$ generated by $\cS$ and $\cT$, \emph{i.e.} the closures of $\cS$ and $\cT$ under mapping cones and shifts (but not direct summands). We also let
$$\cG_1^W \mbox{ and } \cG_2^W \ \subset D^b(\cX, W)$$
be the subcategories consisting of objects that are (homotopy-equivalent to) matrix factorizations whose underlying vector bundles are direct sums of shifts of the bundles appearing in $\cS$ and $\cT$ respectively.

\begin{prop}\label{windows} The restriction functors
$$\iota_1^*: \cG_1 \To D^b(X_1) \quad\mbox{and}\quad \iota_1^*: \cG_1^W \To D^b(X_1, W)$$
are both equivalences, and the restriction functors
$$\iota_2^*:\cG_2 \To D^b(X_2) \quad\mbox{and}\quad  \iota_2^*: \cG_2^W \To D^b(X_2, W)$$
are both embeddings.
\end{prop}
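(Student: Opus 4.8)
The plan is to treat the four statements in tandem, but with the two $W$-free statements first, since the statements with $W$ will follow by a standard deformation-to-the-normal-cone / Koszul-resolution argument once we know the $W$-free ones. For the $W$-free statements, the strategy is the one pioneered in the window-categories literature (Halpern-Leistner, Ballard--Favero--Katzarkov, and in the form used here, \cite{ADS}): the stack $\cX$ is a quotient of a linear representation by $\GL(S)=\GL_2$, and both $X_1$ and $X_2$ are the semistable loci for GIT stability conditions of opposite sign (with respect to the single character $\det S$ of $\GL_2$). The complements $\cX\setminus X_1$ and $\cX\setminus X_2$ are the unstable strata; by the theory of grade-restriction windows, a subcategory of $D^b(\cX)$ spanned by those $\GL_2$-representations whose $\det S$-weights lie in a suitable window of width equal to the relevant "$\eta$-weight" of the normal bundle to the stratum maps equivalently to $D^b$ of the corresponding GIT quotient. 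So the first step is: compute, for each of the two strata, the weights of the conormal bundle under the one-parameter subgroup destabilising that stratum, determine the resulting window width, and check that the restriction of $\cS$ to the $\det S$-grading lands in a window of exactly that width for $X_1$ (giving an equivalence), while $\cT$ lands in a window of width $\le$ that of the available window for $X_2$ (giving only an embedding, since $\cT$ may be a proper sub-window).

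Concretely: on the $X_1$ side, the unstable locus is $\{\rk x\le 1\}$, and the destabilising cocharacter acts on the $p$-directions and on part of the $x$-directions; the "$\SL_2$-part" of the representations in $\cS$ is $\Sym^l S$ with $l<L$, i.e. of $\SL_2$-highest weight bounded by $n$ (odd case) or a similar bound (even case), which is precisely tuned so that the $\Sym^l S\otimes(\det S)^m$ with the prescribed range of $(l,m)$ form a full set of representatives for the window — here I would lean on the fact, recalled in the excerpt, that on $\Gr(2,V)\subset X_1$ these bundles restrict to (the dual of) Kuznetsov's full strong exceptional collection \cite{KuzECs}, hence certainly generate $D^b(\Gr(2,V))$, and then use Kn\"orrer periodicity \eqref{knorrer} together with the vector-bundle structure $X_1\to\Gr(2,V)$ to promote this to generation of $D^b(X_1)$. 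On the $X_2$ side, the unstable locus is $\{p=0\}$; here $X_2$ is a vector bundle over the stack $\cP$ of \eqref{cPdef}, which is a $B\SL_2$-bundle over $\P(U)$, so $D^b(X_2)\simeq D^b(\cP)$ and the representations in $\cT$, restricted to $\cP$, are pulled back from $\P(U)$ tensored with $\Sym^l S$ for $l<L$; the claim is that these are linearly independent in $D^b(\cP)$ — i.e. the functor is an embedding — which reduces, via the Leray spectral sequence for $\cP\to\P(U)$ and the fact that $R\Gamma(B\SL_2,\Sym^l S\otimes\Sym^{l'}S^\vee)$ is $\C$ in degree $0$ if $l=l'$ and vanishes otherwise, to computing $\Ext$'s of the $\cO_{\P(U)}(m)$'s, which are governed by Bott vanishing on $\P(U)=\P^{k-1}$ and the range $m\in[0,k)$.

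The cleanest way to organise all of this is to verify a single statement for each stratum: that the semiorthogonal "window" decomposition $D^b(\cX)=\langle\,\cdots,\ \iota_i^*\text{-window},\ \cdots\,\rangle$ holds, and that the linear span of $\cS$ (resp. $\cT$) sits inside the window component in the way claimed. For the equivalences this is the statement that $\cS$ spans exactly the window; for the embeddings it is the statement that $\iota_2^*$ is fully faithful on $\langle\cT\rangle$, which by the window theorem is equivalent to fully faithfulness of $\iota_2^*$ on the window, combined with $\langle\cT\rangle$ being a full subcategory of the window — and the latter is automatic since $\cT$ is a subset of a set of objects spanning the window. Passing from the $W$-free to the $W$-twisted statements, one uses that a matrix factorization built from bundles in $\cS$ (resp. $\cT$) restricts to $X_i$, and that $\RHom$ in $D^b(\cX,W)$ between such objects is computed by the same Koszul-type complex as in the $W$-free case (this is exactly the reduction carried out in \cite[\S2]{ADS}); hence full faithfulness / equivalence descends.

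The main obstacle, as usual in this circle of ideas, is the bookkeeping in the window-width computation on the Grassmannian side when $n$ is even: there the set $\cS$ has the asymmetric shape in the statement (the top $\SL_2$-layer $l=L-1$ is only allowed $\det S$-weights in $[0,n/2)$ rather than $[0,n)$), and one must check that this precisely matches the window dictated by the conormal weights of the stratum $\{\rk x\le1\}$ — in effect that the collection is neither too big (which would break fully faithfulness) nor too small (which would break generation). The odd case is more uniform and should go through exactly as in \cite{ADS}. Everything else — the $B\SL_2$ and $\P^{k-1}$ cohomology computations, the Kn\"orrer reductions — is routine and can be cited from \cite{ADS, Shipman, KuzECs}.
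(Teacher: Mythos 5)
Your overall plan lines up with the paper's: cite \cite{ADS} for the machinery, check that restriction to $X_1$ (resp.~$X_2$) neither creates higher Ext groups nor changes the degree-zero $\Hom$'s between the bundles in $\cS$ (resp.~$\cT$), check that $\cS$ generates $D^b(X_1)$ via Kuznetsov's exceptional collection, and pass to the $W$-twisted statements by the Koszul/perturbation argument of \cite[Prop.~4.9]{ADS}. The paper does exactly this, and its one new ingredient is Lemma~\ref{Extvanishing}, the Borel--Weil--Bott bookkeeping needed for the asymmetric shape of $\cS$ when $n$ is even -- which you correctly flag as "the main obstacle."

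The caveat is the opening paragraph on grade-restriction windows. For the non-abelian group $\GL(S)=\GL_2$, the Halpern-Leistner/BFK window subcategory is \emph{not} cut out by a constraint on the $\det S$-weight alone, and the destabilising cocharacter for the $X_1$-side stratum $\{\rk x\le 1\}$ is not central, so the window is a more complicated region of the weight lattice than the "rectangles" $\cS$ and $\cT$. In particular your claim that the embedding for $\iota_2^*|_{\cG_2}$ is "automatic since $\cT$ is a subset of a set of objects spanning the window" is not justified: a direct check shows the central $\C^*$-weights $l+2m$ of $\cT$ exceed $2k = $ the $\eta$-weight of the normal bundle to $\{p=0\}$ as soon as $L\ge 3$, so $\cT$ does not sit inside the naive HL window for $X_2$. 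This is precisely why \cite{ADS} (and hence this paper) bypass the abstract window theorem and instead verify $\RHom_\cX(E,F)\isoto\RHom_{X_i}(\iota_i^*E,\iota_i^*F)$ by hand. Your fallback argument -- Borel--Weil--Bott on $\Gr(2,V)$, the Leray computation over $\cP\to\P(U)$ with the $B\SL_2$-invariants, and Bott vanishing on $\P^{k-1}$ -- is the actual mechanism and is correct; but you should also explicitly compare it with the $\cX$-side, where $\RHom_\cX$ is concentrated in degree zero because $\cX$ is a reductive quotient of an affine space, and state that the restriction map is an isomorphism. With the HL framing dropped and this comparison made explicit, your argument is the paper's.
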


\begin{proof} The statements without $W$ are proved by exactly the same argument as for \cite[Proposition 4.1]{ADS}; restriction to $X_1$ or $X_2$ does not create any higher Ext groups between the respective sets of vector bundles, and the restriction of $\cS$ generates $D^b(X_1)$.\footnote{The vector bundles appearing in \cite{ADS} are powers of $S^\vee$ rather than $S$, but this makes no difference to the arguments.} The only additional ingredient we need is a minor generalisation of \cite[Lemma 4.5]{ADS} to cover the case when $n$ is even, and we provide this as Lemma \ref{Extvanishing} below. 

The statements with $W$ follow from the statements without $W$, using the proof of \cite[Proposition 4.9]{ADS} verbatim.
\end{proof}

\begin{lem}\label{Extvanishing} Let $n=\dim V$ be even, and let $\Sym^l S(-m)$ and $\Sym^{l'}S(-m')$ be two  $GL(S)$-representations lying in the set $\cS$. Then for any $t\geq 0$, we have the following vanishing of higher Ext groups 
$$\Ext^{>0}_{\Gr(2, V)}\big( \Sym^l S(-m), \, \Sym^{l'}S(t-m')\big)=0 $$
between the corresponding vector bundles on $\Gr(2,V)$.
\end{lem}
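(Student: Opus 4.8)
The plan is to reduce everything to the Borel--Weil--Bott theorem on $\Gr(2,n)$, following the strategy of \cite[Lemma 4.5]{ADS} (which treats $n$ odd) and keeping track of where the parity enters. First I would write $\Ext^i_{\Gr(2,V)}(\cE,\cF)=H^i(\Gr(2,V),\,\cE^\vee\otimes\cF)$ and use that $S$ has rank $2$, so that $S^\vee\cong S\otimes(\det S)^{-1}$ and $\cO(1)\cong(\det S)^{-1}$ on $\Gr(2,V)$, together with the Clebsch--Gordan splitting $\Sym^l S\otimes\Sym^{l'}S\cong\bigoplus_{j=0}^{\min(l,l')}\Sym^{\,l+l'-2j}S\otimes(\det S)^{j}$. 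This expresses $\Ext^i_{\Gr(2,V)}\big(\Sym^l S(-m),\,\Sym^{l'}S(t-m')\big)$ as a finite direct sum, over $0\le j\le\min(l,l')$, of groups $H^i\big(\Gr(2,V),\,\Sym^{a}S\otimes(\det S)^{b}\big)$ with $a=l+l'-2j\ge0$ and $b=j-l-m-t+m'$, so it suffices to kill higher cohomology of each of these bundles.

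Next I would identify $\Sym^a S\otimes(\det S)^b$ with the Schur bundle $\Sigma^{(\mu_1,\mu_2)}S^\vee$, where $\mu_1=-b$ and $\mu_2=-a-b$ (and $\mu_1\ge\mu_2$ since $a\ge0$), and apply Bott's algorithm on $\Gr(2,n)$: one forms the sequence $(\mu_1+n-1,\ \mu_2+n-2,\ n-3,\ n-4,\ \dots,\ 1,\ 0)$; all cohomology vanishes if two entries coincide, which happens precisely when $\mu_1\in[1-n,-2]$ or $\mu_2\in[2-n,-1]$; otherwise the cohomology is concentrated in a single degree, which is $0$ exactly when the sequence is strictly decreasing, i.e.\ when $\mu_2\ge0$. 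Hence $H^{>0}$ can be non-zero only when $\mu_2\le1-n$ and $\mu_1\notin[1-n,-2]$, and the task becomes to show that the numerical conditions defining $\cS$, together with $j\le\min(l,l')$ and $t\ge0$, exclude this.

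This last point is the combinatorial heart of the proof, and the only place the even case genuinely differs from the odd case. From $j\le l$, $m\ge0$, $t\ge0$ and $m'\le n-1$ one has $\mu_1=(l-j)+(m-m')+t\ge 1-n$, so $\mu_1\le-n$ cannot occur and the only residual danger is $\mu_1\ge-1$ with $\mu_2\le1-n$; this forces $a=\mu_1-\mu_2\ge n-2$. But $a=l+l'-2j\le l+l'\le 2(L-1)=n-2$, so $a=n-2$, which pins $l=l'=L-1$ and $j=0$. This is exactly the case in which membership in $\cS$ (for $n$ even) requires $m,m'\in[0,n/2)$; feeding $m,m'\le L-1$, $j=0$ and $t\ge0$ into $\mu_2=(m-m')+t+j-l'$ gives $\mu_2\ge -2(L-1)=2-n>1-n$, a contradiction.

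The argument uses no serious input beyond Bott's theorem; the part that needs care is purely bookkeeping: fixing the normalization of Bott's algorithm on $\Gr(2,n)$ (in particular the $S$ versus $S^\vee$ convention and the exact positions of the two walls $\mu_1\in[1-n,-2]$, $\mu_2\in[2-n,-1]$), and recognising that the single extremal configuration $l=l'=L-1$, $j=0$ is precisely the one whose control forces the asymmetric cut-off $m<n/2$ in the definition of $\cS$ when $n$ is even.
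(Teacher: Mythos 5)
Your proof is correct and takes essentially the same route as the paper's: convert to $H^*$ of a Schur bundle on $\Gr(2,V)$, apply Borel--Weil--Bott, and then show the numerical constraints defining $\cS$ (together with $t\ge0$) exclude the only weights that could yield higher cohomology, with the asymmetric cut-off $m<n/2$ when $l=L-1$ doing exactly the work needed. The minor differences---you carry all Clebsch--Gordan summands $j$ explicitly rather than reducing recursively to $j=0$, and you organise the combinatorics around the extremal configuration $a=n-2$, $l=l'=L-1$, $j=0$ rather than casing on $l'$---are matters of presentation, not substance.
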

\begin{proof}
We follow the proof of \cite[Lemma 4.5]{ADS} quite closely. We need to prove the vanishing of higher cohomology for the bundle $\Sym^l S^\vee \otimes \Sym^{l'} S^\vee(m-m'-l'+t)$, and by decomposing this tensor product and arguing recursively, it's sufficient to deal with the summand $\Sym^{l'+l} S^\vee(m-m'-l'+t)$. We do this using the Borel--Weil--Bott algorithm, applied to the weight 
$\alpha= (\alpha_1, \alpha_2, 0,...,0)$ where
$$\alpha_1 = m-m'+l+t \hspace{1cm}\mbox{and}\hspace{1cm} \alpha_2= m-m'-l'+t. $$
Adding $\rho$ (half the sum of the positive roots) gives 
$$\alpha+\rho = (\alpha_1+n,\, \alpha_2+n-1, \, n-2,\, ...,\, 1 ).$$
Since $\alpha_1-\alpha_2=l'+l\geq 0$, the weight is dominant iff $\alpha_2\geq 0$, in which case the higher cohomology of the bundle vanishes. Furthermore, if either
$$ \alpha_2\in [2-n, -1] \hspace{1cm}\mbox{or}\hspace{1cm} \alpha_1\in [1-n, -2]$$
then two entries of $\alpha+\rho$ are the same, and all cohomology of the bundle vanishes. Now consider the definition of $\cS$. We must have either:
\begin{enumerate}
\item $l'=\tfrac12 n-1$ and $m'\leq \tfrac12 n -1$. Then $\alpha_2\geq 2-n$. Or
\item $l'\leq \tfrac12 n-2$. Then $\alpha_1-\alpha_2=l+l'\leq n-3$. Since $\alpha_1\geq -m'\geq 1-n$, we must either have $\alpha_1\in [1-n, -2]$ or $\alpha_2\geq -1-l-l' \geq 2-n$.\qedhere
\end{enumerate}
\end{proof}
\pgap

Under the numerical condition in \eqref{numcond} we have that
\beq{inclusions} \cG_2\subset \cG_1\quad\quad\mbox{and}\quad\quad \cG_2^W\subset\cG_1^W.\eeq

\begin{prop}\label{windowadjoint}
The restriction functors
$$\iota_2^*: \cG_1\To D^b(X_2) \quad\quad\mbox{and}\quad\quad  \iota_2^* : \cG_1^W \To D^b(X_2, W)$$
land in the subcategories $\iota_2^*(\cG_2)$ and $\iota_2^*(\cG_2^W)$ respectively. Given the numerical condition in \eqref{numcond}, these functors are the right adjoints to the inclusions \eqref{inclusions}.
\end{prop}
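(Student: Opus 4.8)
The plan is to deduce everything from Proposition \ref{windows} together with the semiorthogonal structure of the exceptional collection $\cS$. First I would address the claim that $\iota_2^*$ maps $\cG_1$ into $\iota_2^*(\cG_2)$. Since $\cG_1$ is generated by the bundles in $\cS$, it suffices to check that each $\iota_2^*(\Sym^l S\otimes(\det S)^m)$ with the representation in $\cS$ lies in $\iota_2^*(\cG_2)$. The bundles in $\cT$ are exactly those in $\cS$ with $m\in[0,k)$, so the only thing to verify is that $\iota_2^*$ kills — or rather, pushes into $\cG_2$ — the bundles with larger $\det S$-twist. Here I would use that $X_2$ is a bundle over the stack $\cP$ of \eqref{cPdef}, whose fibre is $B\!\SL_2$, so that on $X_2$ the line bundle $\det S$ becomes trivialized up to twisting by pullbacks from $\P(U)$; more precisely, $\Wedge^2 S$ is identified (via the nowhere-vanishing section $p$) with a pullback of $\cO_{\P(U)}(-1)\otimes(\text{triv})$, and on $\Gr(2,V)$ we have $\det S=\Wedge^2 S$. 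Thus on $X_2$ each twist by $\det S$ can be traded for a twist by a line bundle pulled back from $\P(U)$, which does not change the $\SL(S)$-isotypic content; iterating, any $\Sym^l S\otimes(\det S)^m$ restricts on $X_2$ into the subcategory generated by $\{\Sym^l S\otimes(\det S)^{m'}: m'\in[0,k)\}$, i.e.\ into $\iota_2^*(\cG_2)$. This is the verbatim analogue of the corresponding step in \cite{ADS}, and I would cite it as such.

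Next, the adjunction. By Proposition \ref{windows}, $\iota_2^*:\cG_2\to D^b(X_2)$ is fully faithful, so $\iota_2^*(\cG_2)\subset D^b(X_2)$ is a copy of $\cG_2$; likewise $\iota_2^*:\cG_1\to D^b(X_2)$ followed by the projection we have just constructed gives a functor $\cG_1\to\cG_2$. To identify this as right adjoint to the inclusion $\cG_2\subset\cG_1$ of \eqref{inclusions}, I would argue on generators: for $T\in\cT$ and $F\in\cG_1$ one must produce a natural isomorphism $\Hom_{\cG_1}(T,F)\cong\Hom_{\cG_2}(T,\,\iota_2^*F\text{-projected})$. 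Both sides are computed by $\RHom$ on $\cX$ and on $X_2$ respectively, and since restriction along the open immersion $\iota_2$ is the identity on Hom-complexes between the relevant bundles up to the ``extra'' $\det S$-twists that get projected away, the comparison reduces to the statement that those extra twists are genuinely orthogonal to $\cT$ inside $D^b(X_2)$ — which is exactly the content of the fibrewise $B\!\SL_2$ picture: on each fibre the higher twists of $\det S$ contribute only to isotypic components not appearing in $\Sym^l S$, $l<L$. Equivalently, one checks that the projection functor $\iota_2^*(\cG_1)\to\iota_2^*(\cG_2)$ is right adjoint to the inclusion $\iota_2^*(\cG_2)\hookrightarrow\iota_2^*(\cG_1)$, transport this back through the equivalences of Proposition \ref{windows}, and note that $\iota_2^*$ intertwines the inclusion $\cG_2\subset\cG_1$ with $\iota_2^*(\cG_2)\subset\iota_2^*(\cG_1)$.

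The curved ($W$-twisted) case follows formally: the subcategories $\cG_i^W$ are defined so that their underlying bundles lie in $\cS$, $\cT$, the restriction functors on the $W$-side are compatible with those on the plain side (this compatibility is already invoked in the proof of Proposition \ref{windows}, ``using the proof of \cite[Proposition 4.9]{ADS} verbatim''), and adjointness is a statement about Hom-complexes that passes through the forgetful/curving comparison unchanged. I would therefore state the $W$-case as ``the same argument, now on matrix factorizations, using \cite[Prop.~4.9]{ADS}.''

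The main obstacle is the first paragraph: making precise and correct the claim that restriction to $X_2$ ``absorbs'' the $\det S$-twists beyond range $[0,k)$ into $\iota_2^*(\cG_2)$. One has to be careful that this uses genericity only through smoothness (it does not — it is a statement about the ambient stack $X_2$, not about $Y_2$), and that the recursion terminating at $m'\in[0,k)$ really does land inside $\langle\cT\rangle$ rather than merely inside $\langle\Sym^l S\otimes(\det S)^{\Z}\rangle$; this is where the precise shape of $\cT$ and the $B\!\SL_2$-fibre structure both enter, and it is exactly the point that \cite{ADS} handles for $n=k=7$ and which we are asserting goes through unchanged under the numerical hypothesis \eqref{numcond}.
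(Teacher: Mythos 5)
Your high-level structure (containment, then adjunction, then the $W$-case) matches the paper, and the third step — deducing the curved case from the uncurved one via \cite[Prop.~4.9]{ADS} — is exactly what the paper does. For the containment $\iota_2^*(\cG_1)\subset\iota_2^*(\cG_2)$ you are on the right track, but the key mechanism is left implicit: the paper replaces $\cO_{\{p=0\}}$ on $\cX$ by its Koszul resolution by the line bundles $\Wedge^\bullet(U\otimes\det S)^\dual$; restricted to $X_2$ this complex becomes acyclic, and twisting by $\cO(i)$ and by $\Sym^l S$ and iterating is precisely the recursion that brings any $\Sym^l S(m)$ (with $l<L$) into $\iota_2^*(\cG_2)$. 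Your appeal to the fact that $\det S$ on $X_2$ is pulled back from $\P(U)$ is true, but by itself it does not give the iteration — you need the acyclic Koszul/Euler complex (your ``iterating'' is doing real work that should be spelled out).

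The genuine gap is in the adjunction step. The correct criterion — and the one the paper checks — is that restriction preserves Hom-complexes between generators: for $E\in\cT$ and $F\in\cS$ one must show $\RHom_\cX(E,F)=\RHom_{X_2}(\iota_2^*E,\iota_2^*F)$, and this is proved by the same Borel--Weil--Bott cohomology vanishing used for Proposition \ref{windows} (Lemma \ref{Extvanishing} and its odd-$n$ analogue from \cite{ADS}). Your proposal instead reduces the adjunction to an ``orthogonality of the extra $\det S$-twists'' claim verified on the $B\!\SL_2$-fibres. This is not correct: on a $B\!\SL_2$-fibre, $\det S$ restricts to the \emph{trivial} $\SL_2$-representation, so twisting by powers of $\det S$ does not move anything to a new isotypic component — it changes nothing at all fibrewise, and in particular does not produce the orthogonality you want. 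Moreover, your own Step 1 shows $\iota_2^*(\cG_1)=\iota_2^*(\cG_2)$, so there is no nontrivial ``projection'' $\iota_2^*(\cG_1)\to\iota_2^*(\cG_2)$ to analyse; the candidate right adjoint is just $(\iota_2^*|_{\cG_2})^{-1}\circ\iota_2^*$, and what you must verify is the $\RHom$-preservation identity above, via the explicit cohomology computation on $\Gr(2,V)$, not a fibrewise representation-theoretic argument.
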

\begin{proof}
Consider first the statements without $W$. On $\cX$ replace $\cO_{\set{p=0}}$ by its Koszul resolution. The resulting sheaves $\Wedge^*(U\otimes\Wedge^2S)^\dual$ are all sums of line bundles. Restricting to $X_2$ the complex becomes acyclic, giving the corresponding relation in $D^b(X_2)$. Repeatedly applying this relation, and its twist by line-bundles $\cO(i)$, shows that the line bundle $\cO(m)$ on $X_2$ lies in $\iota_2^*(\cG_2)$, for any value of $m$. Similarly, tensoring the acyclic complex with $\Sym^l S$ (and applying this relation repeatedly) shows that $\Sym^l S(m)$ lies in $\iota_2^*(\cG_2)$ for any $m$, provided that $l<L$. Therefore $\iota_2^*(\cG_1)\subset \iota_2^*(\cG_2)$.

The statement that $\iota_2^*$ is the right adjoint to the inclusion $\cG_2\subset \cG_1$ can be checked on the generators, so we need to know that  if $E\in \cT$ and $F\in \cS$ then 
$$\RHom_{\cX}(E, F) = \RHom_{X_2}(\iota_2^* E, \iota_2^* F).$$
In other words, for these two bundles the $\Ext^0$ group doesn't change upon restriction to $X_2$, and on $X_2$ there are no higher Ext groups. This is proved by precisely the same method as in the proof of Proposition \ref{windows}.

As before, the statements with $W$ follow from the statements without $W$ by the techniques of \cite[Proposition 4.9]{ADS}.
\end{proof}

We define 
$$\Br(X_2, W) \subset D^b(X_2, W)$$
to be the image of $\cG_2^W$; it is hopefully the category of B-branes in some associated SQFT.

If we assume the numerical condition from \eqref{numcond}, then putting together Kn\"orrer periodicity \eqref{knorrer} with Propositions \ref{windows} and \ref{windowadjoint} shows that \eqref{inclusions} gives an embedding
\beq{embedding}\Br(X_2, W) \Into D^b(Y_1)\eeq
as a right-admissible subcategory. To prove Theorem \ref{maingeneral}, it remains to show that $D^b(Y_2)$ embeds as a right-admissible subcategory of $\Br(X_2,W)$.

\begin{rem}\label{orthogonal} To better compare our result with HPD (see the Introduction) let us make some remarks about right-orthogonals.

Purely formally, the right-orthogonal to $\Br(X_2, W)$ inside $D^b(Y_1)$ is the same thing as the kernel of the right adjoint functor $D^b(Y_1)\to \Br(X_2, W)$. By Proposition \ref{windowadjoint} this adjoint functor is given by: apply Kn\"orrer periodicity to get to $D^b(X_1, W)$, extend into $\cG_1^W$, then apply $\iota_2^*$. 

Under Kn\"orrer periodicity, the structure sheaf $\cO_{Y_1}$ maps to the skyscraper sheaf along the locus $X_1|_{Y_1} \subset X_1$. If we view the section $\sigma$ \eqref{sigma} as  section of a bundle on $X_1$ (by pulling up), then its zero locus is $X_1|_{Y_1}$.  Hence $\cO_{X_1|_{Y_1}}$ has a Koszul resolution using $\sigma$, and this can be perturbed to produce an equivalent matrix factorization
\beq{koszul}\xymatrix@C=20pt{
\cO(-k)\; \ar[r]<2pt>^-{\sigma}
&  \;\;... \;\;\ar[l]<2pt>^-{p}   \ar[r]<2pt>^-{\sigma}
&\; U(-1)\; \ar[r]<2pt>^-{\sigma} \ar[l]<2pt>^-{p}
& \;\cO. \ar[l]<2pt>^-{p}
}\eeq
 Kn\"orrer periodicity commutes with tensoring by sheaves on $Y_1$, so the vector bundle $\Sym^l S (-m)$ on $Y_1$ maps to the matrix factorization \eqref{koszul}, tensored with $\Sym^l S (-m)$. Suppose we choose $l$ and $m$ such that every summand in this matrix factorization corresponds to a representation lying in the set $\cS$. Then the lift of this object into $\cG_1^W$ is obvious -- we take exactly the same expression, but consider it as a matrix factorization on $\cX$.

Now use the symmetry between $\sigma$ and $p$, and regard this matrix factorization as a pertubation of (a twist and shift of) the skyscraper sheaf along the locus $\set{p=0}\subset \cX$. Evidently this object lies in the kernel of the functor $\iota_2^*$.

This argument shows the following statement: take a representation $\Sym^l S \otimes (\det S)^m$ such that $\Sym^l S \otimes (\det S)^{m+t}$ lies in $\cS$ for every $t\in [0,k]$. Then the associated vector bundle on $Y_1$ lies in the right-orthogonal to $\Br(X_2, W)$, and so (by the results of Section \ref{pfaffianside}) it lies in the right-orthogonal to $D^b(Y_2)$. 

This shows that that the right-orthogonal to $D^b(Y_2)$ includes all the vector bundles lying in a certain 'rectangle' within $\cS$. For example, setting $n=10$ and $k=5$ we see that (after a twist by $\cO(-4)$) the 20 vector bundles from \eqref{YX} are indeed orthogonal to $D^b(Y_2)$. To agree with the HPD story, we should really prove that these bundles generate the entire right-orthogonal to $D^b(Y_2)$. In particular this would mean that $D^b(Y_2)$ is actually equivalent to $\Br(X_2, W)$.

\end{rem}

\section{Quadratic bundles arising from symplectic bundles}\label{quadbundles}

Given a vector bundle equipped with an everywhere non-degenerate quadratic form, Kn\"orrer periodicity implies that the category of matrix factorizations on the total space of the bundle is equivalent to the derived category of the base space, once we twist the latter by a Brauer class. If the bundle admits a maximally-isotropic subbundle $M$ then the Brauer class vanishes, and the skyscraper sheaf $\cO_M$ can be used to construct an equivalence between the matrix factorization category and the ordinary derived category of the base. However, the existence of $M$ is a rather stronger condition than the vanishing of the Brauer class.

In this section we describe, for quadratic vector bundles of a particular type,  an alternative construction which proves the vanishing of the Brauer class and provides the equivalence between the two categories.

\subsection{Cleanly intersecting submanifolds of $\set{W=0}$}\label{cleanlyintersecting}

Before discussing any quadratic vector bundles we make a rather general observation. Let $(X, W)$ be any Landau-Ginzburg B-model, and let
$$A, B\subset\big\{W=0\big\}\subset X$$
be submanifolds of the zero locus of $W$. Assume that $A$ and $B$ intersect cleanly, so $A\cap B$ is also a submanifold, and we have an excess normal bundle 
$$ E \,=\, \frac{T_X}{T_A + T_B} \ \ \mathrm{on\ }A\cap B. $$
Let $r$ denote the rank of $E$, and let $a$ be the codimension of $A\subset X$. Then in the ordinary derived category $D^b(X)$, a standard computation with the Koszul resolution gives the Ext sheaves between $\cO_A$ and $\cO_B$ as
%$$\ext^{\bullet}(\cO_A, \cO_B) = \Wedge^{\bullet}\big(E^\dual[1]\big) \otimes \det N_{A/X} [-\codim A]. $$
\beq{sheafy}
\ext^i(\cO_A, \cO_B) = \Wedge^{a-i}E^\dual \otimes \det N_{A/X}, \quad a-r\le i\le a,
\eeq
and zero otherwise.

Since $A$ and $B$ lie in $\set{W=0}$, the sheaves $\cO_A$ and $\cO_B$ define objects in $D^b(X, W)$. By a minor extension of the argument in  \cite[\S A.4]{ASS}, there is a spectral sequence computing the local sheaf of morphisms between them in $D^b(X,W)$, whose 2nd page consists of the sheaves \eqref{sheafy} and whose differential is given by wedging with the section\footnote{This section is well-defined, since $W$ vanishes along $A$ and $B$.}
$$dW\colon \cO_{A\cap B} \To E^\dual.$$
Suppose that this section of $E^\dual$ is transverse to 0 with zero locus $Z$ (which is therefore a component of the critical locus of $W$). Then by the 3rd page only one term remains:
$$
\ext^{a-r}_{D^b(X, W)}(\cO_A, \cO_B) = \cO_Z\otimes\det E^\dual\otimes\det N_{A/X}.
$$
%\beq{homformula}\ext^\bullet_{D^b(X, W)}(\cO_A, \cO_B) = \cO_Z \otimes K_{A\cap B}\otimes K_B^{-1}[\dim A\cap B - \dim B] \eeq
Thus the spectral sequence collapses to give
\beq{homformula}
R\hom_{D^b(X, W)}(\cO_A, \cO_B)\,=\,\cO_Z \otimes K_{A\cap B}\otimes K_B^{-1}[\dim A\cap B-\dim B].
\eeq
Here $K_{A\cap B}$ and $K_B$ denote the canonical bundles, and we have used $a-r=\dim B-\dim A\cap B$.

\subsection{Another version of Kn\"orrer periodicity}

Let $S$ and $V$ be two symplectic vector spaces. Let $\theta_S\in\Wedge^2S$ be the Poisson bivector on $S$ and $\Omega_V$ be the symplectic form on $V$. Then the vector space $\Hom(S, V) $ carries a non-degenerate quadratic form
\beq{W2}
W\colon x \Mapsto \Langle\Omega_V,\wedge^2x\,(\theta_S)\Rangle.
\eeq
By Kn\"orrer periodicity, the category $D^b(\Hom(S,V),W)$ is equivalent to the derived category of a point $D^b(\pt)$, non-canonically. An equivalence is specified by any exceptional object that generates the category. One option is to choose a Lagrangian $L\subset V$ and take the skyscraper sheaf of the corresponding maximally-isotropic subspace:
\beq{LM}
M:= \Hom(S, L) \subset \Hom(S,V).
\eeq
This gives an equivalence 
$$R\Hom(\cO_M,\ \cdot\ )\colon D^b\big(\!\Hom(S,V), W\big) \isoto D^b(\pt) $$
sending $\cO_M$ to $\cO_{\pt}$. Our next result says that in this situation there is a more canonical generator, independent of any choices, and hence equivariant with respect to both $\Sp(S)$ and $\Sp(V)$.

Let $\LGr(S)$ denote the Lagrangian Grassmannian of $S$, and let 
$$ \LGr(S) \stackrel{\pi_1}{\longleftarrow}\LGr(S) \times \Hom(S,V) \stackrel{\pi_2}{\longrightarrow} \Hom(S,V) $$
denote the projections onto the two factors. The vector bundle $\pi_1$ carries a family of non-degenerate quadratic forms $\pi_2^* W$ and a natural maximally-isotropic subbundle
\beq{Ndef}
N := \Hom(S/\Lambda, V),
\eeq
where $\Lambda\to\LGr(S)$ is the tautological Lagrangian subbundle of $S$. The skyscraper sheaf $\cO_N$ is an object of $D^b\big(\!\LGr(S)\times \Hom(S,V), \pi_2^* W\big)$.

\begin{prop}\label{choicefreegenerator} The object
\beq{cEdef}
\cE := R\pi_{2*} \!\left(\cO_N\otimes (\det \Lambda)^{-\frac12\dim V}\right)
\,\in\,D^b\big(\!\Hom(S,V), W\big)
\eeq
is exceptional and generates the category.
\end{prop}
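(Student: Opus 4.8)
The plan is to deduce this from the clean-intersection formula \eqref{homformula} of Section \ref{cleanlyintersecting}, by comparing $\cE$ against the standard generator $\cO_M$. So, choose a Lagrangian $L\subset V$ and set $M=\Hom(S,L)$ as in \eqref{LM}; recall that $\cO_M$ generates $D^b(\Hom(S,V),W)$ and that $\RHom(\cO_M,-)$ is an equivalence onto $D^b(\pt)$ sending $\cO_M$ to $\cO_\pt$. Any object of $D^b(\pt)$ whose $\RHom$ from $\cO_\pt$ is one-dimensional and concentrated in a single cohomological degree is a shift of $\cO_\pt$, hence exceptional and a generator. It therefore suffices to show that $\RHom_{D^b(\Hom(S,V),W)}(\cO_M,\cE)$ is one-dimensional and lives in a single degree.

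First I would unwind $\cE$. Put $\cL:=(\det\Lambda)^{-\frac12\dim V}$. Since $\pi_2$ is smooth and proper, the adjunction $L\pi_2^*\dashv R\pi_{2*}$ holds for matrix factorisations too, and $\pi_2^*\cO_M=\cO_{\LGr(S)\times M}$, so
\beq{reduceME}
\RHom_{D^b(\Hom(S,V),W)}(\cO_M,\cE)\ =\ \RHom_{D^b(\LGr(S)\times\Hom(S,V),\,\pi_2^*W)}\big(\cO_{\LGr(S)\times M},\ \cO_N\otimes\cL\big).
\eeq
To the right-hand side I would apply Section \ref{cleanlyintersecting} with $X=\LGr(S)\times\Hom(S,V)$, $A=\LGr(S)\times M$ and $B=N$. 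Both $A$ and $B$ lie in $\{\pi_2^*W=0\}$ since $M$ and $N$ are isotropic, and a direct check shows they intersect cleanly along the total space of the subbundle $\Hom(S/\Lambda,L)\subset\LGr(S)\times\Hom(S,V)$, with excess bundle $E\cong\Hom(\Lambda,V/L)$. Using a Darboux splitting $\theta_S=\sum_i\lambda_i\wedge\mu_i$ with the $\lambda_i$ spanning $\Lambda$, one computes that $dW\colon\cO_{A\cap B}\to E^\dual$ is, fibrewise over $\LGr(S)$, a linear isomorphism --- hence transverse to zero, with zero locus $Z=\LGr(S)\times\{0\}$ (which is all of $\Crit(\pi_2^*W)$). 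Formula \eqref{homformula} then identifies the right side of \eqref{reduceME} with
\[
R\Gamma\big(Z,\ K_{A\cap B}\otimes K_B^{-1}\otimes\cL\big|_Z\big)\,[\dim(A\cap B)-\dim B].
\]

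It remains to evaluate this. Restricting everything to $Z\cong\LGr(S)$ and using $K_{\LGr(S)}=(\det\Lambda)^{\frac12\dim S+1}$, the triviality of $\det S$ and $\det V$ ($S$ and $V$ being symplectic), and the identity $K_{\Tot(\cF)}=\pi^*\big(K_{\mathrm{base}}\otimes\det\cF^\dual\big)$ for a vector bundle $\cF$, one finds that $K_{A\cap B}\otimes K_B^{-1}\big|_Z$ equals $(\det\Lambda)^{\frac12\dim V}$ up to a trivial twist. Tensoring with $\cL\big|_Z=(\det\Lambda)^{-\frac12\dim V}$ cancels the power of $\det\Lambda$ exactly, leaving a trivial line bundle on $\LGr(S)$; as $\LGr(S)$ is rational and projective its cohomology is $\C$, in degree $0$. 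Finally $\dim B-\dim(A\cap B)=\rk\Hom(S/\Lambda,V)-\rk\Hom(S/\Lambda,L)=\tfrac14\dim S\dim V$, an integer. Hence $\RHom(\cO_M,\cE)$ is one-dimensional and concentrated in degree $\tfrac14\dim S\dim V$, so $\cE$ corresponds under Kn\"orrer periodicity to a shift of $\cO_\pt$; therefore it is exceptional and it generates.

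I expect the main obstacle to be the local analysis along $A\cap B$: verifying that the intersection is genuinely clean, identifying the excess bundle $E$, and above all checking that the obstruction section $dW\in\Gamma(E^\dual)$ is transverse with zero locus exactly the zero section $Z$, so that the spectral sequence of Section \ref{cleanlyintersecting} collapses to a single line bundle on $\LGr(S)$ rather than to something larger. The determinant-line bookkeeping is routine but needs care, because it is precisely the normalising twist $(\det\Lambda)^{-\frac12\dim V}$ built into \eqref{cEdef} that trivialises the resulting bundle; with any other choice one would be left with a nontrivial line bundle on $\LGr(S)$ whose cohomology could fail to be one-dimensional.
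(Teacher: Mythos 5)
Your proof is correct and follows essentially the same route as the paper: reduce $\RHom(\cO_M,\cE)$ on the base to $\RHom(\cO_{\widetilde M},\cO_N\otimes\cL)$ upstairs (you via the $L\pi_2^*\dashv R\pi_{2*}$ adjunction, the paper via the commuting square built from the projection formula and family Kn\"orrer periodicity along $\pi_1$ --- two packagings of the same fact), then apply the clean-intersection formula \eqref{homformula} with $A=\widetilde M$, $B=N$, and check that the normalising twist by $(\det\Lambda)^{-\frac12\dim V}$ cancels the line bundle $(\det\Lambda)^{\frac12\dim V}\otimes(\det L)^{-\frac12\dim S}$ that appears, leaving a shift of $\cO_{\LGr(S)}$ and hence $\C$ in a single degree. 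The computations of the excess bundle, the transversality of $dW$, the canonical-bundle bookkeeping, and the shift $\tfrac14\dim S\cdot\dim V$ all match the paper's.
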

\begin{proof}
Choose a Lagrangian $L\subset V$, giving a maximally-isotropic subspace $M \subset \Hom(S,V)$ as in \eqref{LM}. Let $\widetilde{M} = \LGr(S)\times M$ be the corresponding maximally-isotropic subbundle of $\LGr(S)\times \Hom(S, V)$. The functor
$$R\pi_{1*} R\hom(\cO_{\widetilde{M}\,},\ \cdot\ )\colon D^b\big(\!\LGr(S) \times \Hom(S,V), \pi_2^* W\big) \To D^b(\LGr(S)) $$
is an equivalence by the simplest version of Kn\"orrer periodicity in families. Moreover the square
\beq{square}\xymatrix@C=90pt{
 D^b\big(\!\LGr(S) \times \Hom(S,V), \pi_2^* W\big)  
\ar[r]^(.6){R\pi_{1*}R\hom(\cO_{\widetilde{M}}, \ \cdot\ )}  \ar[d]_{R\pi_{2*}}
     &    D^b(\LGr(S)) \ar[d]^{R\Gamma} \\
D^b(\Hom(S,V), W) \ar[r]^(.6){R\Hom(\cO_M,\ \cdot\ )} &  D^b(\pt) 
}
\eeq
commutes by the projection formula.

Now we take our tautological maximal isotropic subbundle $N$ \eqref{Ndef} and compute 
$$R\hom\big(\cO_{\widetilde{M}}, \cO^{}_N\big)$$
in $D^b\big(\!\LGr(S) \times \Hom(S,V), \pi_2^* W\big)$, using the analysis from Section \ref{cleanlyintersecting}. The submanifolds $\widetilde{M}$ and $N$ intersect cleanly 
along the subbundle
$$\widetilde{M}\cap N = \Hom (S/  \Lambda, L)$$
with excess normal bundle
$$ E = \Hom(  \Lambda , V/L) $$
over $\widetilde{M}\cap N$. From the definition of $W$, we have that
$$dW\big|_x  =\theta_S\circ x^\dual\circ\Omega_V \in \Hom(V,S)=\Hom(S, V)^\vee, $$
where we consider $\theta_S$ and $\Omega_V$ as skew elements of $\Hom(S^\dual,S)$ and $\Hom(V,V^\dual)$ respectively. Therefore at a point $(\Lambda,x)$ of $\widetilde{M}\cap N$ the derivative of $\pi_2^* W$ is the map
$$ \theta_S\circ x^\dual\circ\Omega_V \in \Hom(L, S/ \Lambda)=\Hom(V/L^\dual,  \Lambda^\vee), $$
 where the last isomorphism follows from the Lagrangian property of $L$ and $\Lambda$. This lies in the fibre of $E^\dual$ over $(\Lambda,x)$. The resulting section of $E^\dual$ has zero locus $\set{x=0} = \LGr(S)$, so it is transverse to the zero section and we may apply \eqref{homformula}. 

By an elementary calculation
$$K_{\widetilde{M}\cap N}\otimes K_N^{-1} = (\det  \Lambda)^{\frac12\dim V}\otimes(\det L)^{-\frac12\dim S} $$
so \eqref{homformula} gives
$$ R\hom\big(\cO_{\widetilde{M}}, \cO^{}_N\big) = \cO_{\LGr(S)} \otimes (\det  \Lambda)^{\frac12\dim V}\otimes(\det L)^{-\frac12\dim S}\big[\!-\tfrac{1}{4}\dim S\cdot\dim V\big]. $$
Consequently, the upper arrow in the square \eqref{square} takes $ \cO_N\otimes (\det  \Lambda)^{-\frac12\dim V} $ to a shift of $\cO_{\LGr(S)}$. Therefore going the other way round the square shows that $\cE$ is taken by the lower arrow to the same shift of
$\cO_{\pt}$. In particular, $\cE$ is isomorphic to a shift of $\cO_M$ in the category $D^b\big(\!\Hom(S,V), W\big)$.
\end{proof}

\begin{rem}\label{rank1locus}
Section 5.4 of \cite{ADS} is similarly concerned with finding a canonical exceptional generator of the category $D^b(\Hom(S,V), W)$, in the particular case that $\dim S=2$ and $\dim V = 4$. There it was proved (by another method) that the skyscraper sheaf on the locus $\set{\rk x \leq 1}$ is such a generator. We now explain how that object relates to the construction given here.

The proof of Proposition \ref{choicefreegenerator} gives us a second canonical exceptional generator
$$
\cE' = R\pi_{2*} \!\left(\cO_N\otimes (\det \Lambda)^{-\frac12\dim V}\otimes \pi_1^*K_{\LGr(S)}\right).
$$
This is easy to see:  since the upper arrow in the square \eqref{square} takes $\cO_N\otimes (\det \Lambda)^{-\frac12\dim V}$ to a shift of $\cO_{\LGr(S)}$, if we throw in this twist we instead obtain a shift of $K_{\LGr(S)}$, and then applying $R\Gamma$ still gives us a shift of $\cO_{pt}$.

Now set $\dim S =2$. Then $\Lambda\to\LGr(S)$ is just $\cO(-1)\to\P (S)$. The image of the map $\pi_2|_N$ is exactly the rank $\leq1$ locus in $\Hom(S,V)$, so both $\cE$ and $\cE'$ are supported on this locus. If we further set $\dim V=4$ then two line-bundles cancel and we have:
$$ \cE' = R\pi_{2*\,}\cO_N\big(\tfrac12 \dim V -2\big)=R\pi_{2*}\cO_N=\cO_{\{\rk x\le 1\}}.$$
\end{rem}
\pgap

Since $\cE$ is canonical, Proposition \ref{choicefreegenerator} works in families. Let $S$ and $V$ be symplectic vector bundles over a base $B$, or even vector bundles carrying symplectic forms only up to scale.\footnote{By this we mean a section of $\Wedge^2V^\dual\otimes L$ for some line bundle $L$, such that the induced map $V\to V^\dual\otimes L$ is an isomorphism.} Then $\Hom(S,V)\stackrel{p}{\to}B$ carries a fibrewise non-degenerate quadratic form $W$ up to scale, given by the formula \eqref{W2}. We form the bundle
$$\pi_2\colon \LGr(S)\times^{}_B \Hom(S,V) \To \Hom(S,V)$$
carrying its tautological subbundle $\Lambda\subset S$. With this we can define
the maximally-isotropic subbundle $N:=\Hom(S/\Lambda,V)$ of $\LGr(S)\times_B \Hom(S,V)\to\LGr(S)$, and
\beq{EBdef}
\cE_B := R\pi_{2*}\Big(\cO_N \otimes (\det  \Lambda)^{-\tfrac12 \rk V}\Big).
\eeq
This is the global analogue of the object \eqref{cEdef}. Zariski-locally, there is also a version of the object $\cO_M$ of \eqref{LM}. The symplectic group is special, so  $V$ is Zariski-locally trivial as a symplectic bundle. Therefore, replacing $B$ by an open subset we may assume that the symplectic form on $V$ is constant. Hence it admits a trivial Lagrangian subbundle $L$, defining a maximally-isotropic subbundle $M\subset\Hom(S,V)$ by the formula \eqref{LM}. 

The proof of Proposition \ref{choicefreegenerator} now applies verbatim: $Rp_*R\hom(\cO_M,\cE_B)$ is a shift of a line bundle on our shrunken $B$. Since $D^b\big(\!\Hom(S,V),W\big)$ is generated over $D^b(B)$ by $\cO_M$, this shows that $\cE_B$ and $\cO_M$ are isomorphic up to a shift and a twist by a line bundle. That is, once we shrink $B$ to ensure that $V$ is trivial, we get the following isomorphism in $D^b\big(\!\Hom(S,V),W\big)$:
\beq{isom}
\cE_B\ \cong\ \cO_M\,\otimes\,(\det L)^{-\frac12\rk S}\big[-\tfrac14\rk S\cdot\rk V\big].
\eeq

It follows that for any $B$ the two Fourier-Mukai functors
\beq{equivalence}
\xymatrix@C=80pt{D^b(B) \ar@^{->}[r]<2pt>^-{\cE_B\otimes p^*(\ \cdot\ )}
& D^b\big(\!\Hom(S,V),W\big) \ar@^{->}[l]<1pt>^-{Rp_*R\hom(\cE_B,\ \cdot\ )}}
\eeq
are mutual inverses; \emph{i.e.} the natural adjunction map from their composition to the structure sheaf of the diagonal is a quasi-isomorphism. Again this can be checked locally, where it follows from \eqref{isom} and the corresponding result for $\cO_M$. Thus \eqref{equivalence} gives an equivalence with \emph{zero Brauer class}.

\section{Pfaffian side}\label{pfaffianside}

In this section we construct an embedding of $D^b(Y_2)$ into $\Br(X_2, W)$. The method is the one employed in \cite[\S 5]{ADS} supplemented with the construction from Section \ref{quadbundles} to produce a global Fourier-Mukai kernel.

We let $\pi$ denote the composition of the projections
$$X_2 \To \cP\To\P(U)$$
of \eqref{cPdef}. The first map is a vector bundle over $\cP$ with fibre $\Hom(S,V)$; the second is a bundle of stacks $B\!\SL_2$. The map $A^\dual\colon U\to\Wedge^2V^\dual$ of \eqref{A} defines a section of $\Wedge^2V^\dual(1)$ over $\P(U)$ -- \emph{i.e.} a family of 2-forms on the $n$-dimensional vector space $V$, defined up to scale. The variety $Y_2\subset\P(U)$ is the locus where this family drops in rank, either from $n$ to $n-2$ (if $n$ is even) or from $n-1$ to $n-3$ (if $n$ is odd).

Let $K\to Y_2$ be the kernel of the family of 2-forms:
$$
0\To K\To V\stackrel{A}{\To}V^\dual(1).
$$
It is a subbundle of the trivial bundle $V\times Y_2$ of rank 2 or 3. Dividing out by $K$ gives a quotient bundle
$$q\colon V\To\widehat{V}:= V / K.$$
 The family of forms $A$ descends to give a family $\widehat A\in\Gamma
\big(\Wedge^2\widehat V^\dual(1)\big)$ of symplectic forms (up to scale) on the vector bundle $\widehat V\to Y_2$.

Now consider the vector bundle
$$\Hom\!\big(S, \widehat{V}\big) $$
on the stack $\cP|_{Y_2}$. Since $S$ is 2-dimensional, this carries an associated family of non-degenerate quadratic forms (up to scale) given by the formula \eqref{W2}. Via $q$ this pulls back to a family of degenerate quadratic forms on the bundle $\Hom(S, V) = X_2|_{Y_2}$; this is precisely the restriction of the function $W$ \eqref{W1}.

We now apply the method of Section \ref{quadbundles} to the symplectic bundles $S,\widehat V$ over the base $B=\cP|_{Y_2}$ to give a object
$$
\cE\in D^b\big(\!\Hom\!\big(S, \widehat{V}\big), W\big)
$$
by the formula \eqref{EBdef}. Pulling up to $\Hom(S, V)$ and pushing forward into $X_2$ gives an object
$$ j_*q^*\cE \in D^b(X_2, W), $$
where $j: X_2|_{Y_2}\into X_2$ denotes the inclusion map.
%We claim that this object $j_*q^*\cE$ `looks like' the structure sheaf on $Y_2$, \emph{i.e.}~
We claim that
\beq{quasiisom}
\cO_{Y_2}\stackrel{\mathrm{id}}{\To}R\pi_*R\hom^{\ }_{D^b(X_2,W)}\!\left(
j_*q^*\cE,\,j_*q^*\cE\right)
\eeq
is a quasi-isomorphism. Again, we can check this locally on $Y_2$.
 
We proceed as at the end of Section \ref{quadbundles}.
Even though our base $B=\cP|_{Y_2}$ is a stack rather than a scheme, the bundle $\widehat V$ is pulled back from the scheme $Y_2$. Therefore we can use the same Zariski-locally-trivial argument. We replace $\P(U)$ by an open subset, thus shrinking $X_2$ and $Y_2$ by basechange. We may then assume $\widehat V$ is trivial and pick a trivial Lagrangian subbundle $L\subset\widehat V$. This defines a maximal isotropic subbundle $M\subset\Hom(S,\widehat V)$ by the formula \eqref{LM}, and we get the isomorphism \eqref{isom}. That is $\cE$ is isomorphic to $\cO_M$ up to a shift and a twist by a line bundle. In particular (now that we have shrunk $X_2$ and $Y_2$ to produce an $M$) we get an isomorphism between
$$
j_*q^*\cE \quad\mathrm{and}\quad j_*\cO_{q^{-1}(M)}
$$
in $D^b(X_2, W)$ up to a shift and a twist.
% \footnote{We emphasise again that we have replaced $Y_2$ by an open subset, and so basechanged $X_2\to Y_2$ accordingly. $M$ need not exist globally.}

A key result of \cite[Proposition 5.3 and  Remark 5.13]{ADS} was that when such an $M$ exists we have that
$$
\cO_{Y_2}\stackrel{\mathrm{id}}{\To}
R\pi_*R\hom^{\ }_{D(X_2,W)}\!\left(j_*\cO_{q^{-1}M},\,j_*\cO_{q^{-1}M} \right)
$$
is a quasi-isomorphism. Therefore \eqref{quasiisom} is also a quasi-isomorphism over our open set, and hence also globally.
% 
% Let $\cU\subset \P(U)$ be a Zariski-open set, and let $Y'= Y_2\cap \cU$. Assume that $\cU$ is small enough that the symplectic bundle $\widehat{V}$ becomes trivial over $Y'$. Then within the open set $Y'$ we may find a Lagrangian subbundle $L$ of $\widehat{V}$, and a corresponding maximally-isotropic subbundle $M\subset \Hom(S, \widehat{V})$. Furthermore the isomorphism \eqref{isom} holds, \emph{i.e.}~$\cE$ is  isomorphic to $\cO_M$ up to a shift and a twist by a line bundle. Consequently we have an isomorphism between the two objects
% $$
% j_*q^*\cE \quad\mathrm{and}\quad j_*\cO_{q^{-1}(M)}\quad \in D^b(X_2|_{\cU}, W)
% $$
% up to a shift and a twist. One of the key calculations of \cite[Proposition 5.3 and Remark 5.13]{ADS} was the result that
% $$j_*\cO_{Y'} \stackrel{\mathrm{id}}{\To} R\pi_*R\hom^{\ }_{D^b(X_2|_\cU,W)}\!\left(j_*\cO_{q^{-1}M},\,j_*\cO_{q^{-1}M} \right)$$
% is a quasi-isomorphism. Hence \eqref{quasiisom} must be a quasi-isomorphism locally in $Y_2$, and so also globally.

Using $j_*q^*\cE$ as a Fourier-Mukai kernel, we consider the functor
\begin{eqnarray*}
F\colon D^b(Y_2) \!&\To&\! D^b(X_2, W), \\
\cF \!&\Mapsto&\! j_*\big(\pi^*\cF\otimes q^*\cE\big).
\end{eqnarray*}

\begin{rem} Just as in \cite{ADS} there is a technical issue here - applying this kernel produces curved complexes of coherent sheaves, but it is not \emph{a priori} obvious that they are always equivalent to matrix factorizations (see Remark \ref{meaningofMF}). So we must justify the claim that this functor really does land in $D^b(X_2, W)$ rather than some larger category. Since $X_2$ is smooth this should probably follow from some foundational theorem, but here it is solved by Proposition \ref{FFF} below, which is a much stronger statement.
\end{rem}

The functor $F$ has a right adjoint
$$F^R\colon \cG \Mapsto R\pi_*R\hom(j_*q^*\cE, \cG)$$
and the equation \eqref{quasiisom} says exactly that $F^R\circ F$ is the identity. Therefore $F$ embeds $D^b(Y_2)$ as a right-admissible subcategory of $D^b(X_2, W)$.

 To conclude the proof of Theorem \ref{maingeneral} we need only show the following.

\begin{prop} \label{FFF} The image of the functor $F$ is contained in the subcategory
$$\Br(X_2, W) \subset D^b(X_2, W). $$
\end{prop}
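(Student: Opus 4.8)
The plan is to make the Fourier--Mukai kernel $j_*q^*\cE$ explicit enough to present $F(\cF)$ as built --- by cones, shifts and the curved differential --- out of the bundles $\Sym^lS(m)$ with $l<L$, and then to invoke the fact, already contained in the proof of Proposition~\ref{windowadjoint}, that $\iota_2^*\big(\Sym^lS(m)\big)$ lies in $\Br(X_2,W)$ for every such $l$ and every $m\in\Z$. (The Koszul relation coming from $\cO_{\{p=0\}}\subset\cX$ exploited there survives tensoring by $\Sym^lS$ for any $l<L$, and shows that $\Sym^lS(m)$ lies in $\iota_2^*(\cG_2)$ for all $m\in\Z$; the $W$-twisted version then follows by the techniques of \cite[Proposition 4.9]{ADS}.) Since $F$ is triangulated, $\Br(X_2,W)$ is a triangulated subcategory of $D^b(X_2,W)$ (Proposition~\ref{windows}), and $D^b(Y_2)$ is generated by vector bundles, it suffices to treat the case in which $\cF$ is a locally free sheaf on $Y_2$.

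First I would compute $\cE$. By \eqref{EBdef} it is $R\pi_{2*}$ of $\cO_N\otimes(\det\Lambda)^{-\frac12\rk\widehat V}$, where $N=\Hom(S/\Lambda,\widehat V)$ sits inside $\LGr(S)\times_B\Hom(S,\widehat V)$ over $B=\cP|_{Y_2}$. Resolving $\cO_N$ by the (perturbed) Koszul complex of its conormal bundle $\Lambda\otimes\widehat V^\dual$ and applying the twist, the $j$-th term becomes $(\det\Lambda)^{\,j-\frac12\rk\widehat V}\otimes\Wedge^j\widehat V^\dual$, for $0\le j\le\rk\widehat V$. Since $\dim S=2$ we have $\LGr(S)=\P(S)\cong\P^1$ with $\Lambda=\cO_{\P(S)}(-1)$, and $\widehat V$ has rank $2(L-1)$, so $\tfrac12\rk\widehat V=L-1$. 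The cohomology of line bundles on $\P^1$ (relative to $B$) then evaluates $R\pi_{2*}$ of each term: for $0\le j\le L-1$ one gets $\Sym^{L-1-j}S^\dual\otimes\Wedge^j\widehat V^\dual$ in cohomological degree $-j$; for $j=L$, nothing; and for $L+1\le j\le 2L-2$ one gets $\Sym^{\,j-L-1}S\otimes\det S\otimes\Wedge^j\widehat V^\dual$ in cohomological degree $1-j$. The decisive point is that every $S$-Schur functor occurring is a symmetric power $\Sym^l$ with $0\le l<L$; rewriting $\Sym^aS^\dual=\Sym^aS\otimes(\det S)^{-a}$, each summand has the shape $\Sym^lS\otimes(\det S)^{m}$ with $l<L$, tensored with an exterior power $\Wedge^j\widehat V^\dual$ (and a power of the symplectic scaling line bundle) pulled back from $Y_2$. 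Replacing this complex by an equivalent matrix factorization (\emph{cf.}~\cite[\S2]{ADS}) and pulling back along $q$ --- which touches neither the $S$- nor the $Y_2$-directions --- exhibits $q^*\cE$ as a matrix factorization of $W|_{X_2|_{Y_2}}$ with underlying bundle a direct sum of such pieces.

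Now I would transport this to $X_2$. Tensoring with $\pi^*\cF$ merely replaces each exterior-power factor $\Wedge^j\widehat V^\dual$ (and its scaling twist) by the bundle $\cF\otimes\Wedge^j\widehat V^\dual$ on $Y_2$; then applying $j_*$, and using the projection formula and flat base change along the Cartesian square $X_2|_{Y_2}=X_2\times_{\P(U)}Y_2$, presents $F(\cF)$ as a curved complex on $X_2$ whose underlying sheaf is a direct sum of terms $\Sym^{l_i}S(m_i)\otimes\pi_{X_2}^*\big(i_{Y_2*}\mathcal{B}_i\big)$ with $l_i<L$ and each $\mathcal{B}_i$ a vector bundle on $Y_2$. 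Each $i_{Y_2*}\mathcal{B}_i$ is an object of $D^b(\P(U))$, hence, via the full exceptional collection $\set{\cO_{\P(U)}(a)\ :\ 0\le a<k}$ on $\P(U)\cong\P^{k-1}$, quasi-isomorphic to a finite complex of sums of the line bundles $\cO_{\P(U)}(a)$. Pulling back to $X_2$, tensoring with $\Sym^{l_i}S(m_i)$, and totalising, we may therefore replace $F(\cF)$ by an equivalent matrix factorization of $W$ on $X_2$ whose underlying bundle is a direct sum of shifts of bundles $\Sym^lS(m)$ with $l<L$ and $m\in\Z$. By the statement recalled in the first paragraph, this lies in $\Br(X_2,W)$, as desired.

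The main obstacle is the pushforward computation of the second paragraph: getting $R\pi_{2*}$, together with all the twists by $\det S$, the exterior powers of $\widehat V$, and the powers of the scaling line bundle, exactly right, so that only symmetric powers $\Sym^l$ with $l<L$ appear --- this is precisely where $\dim S=2$ and the identity $\tfrac12\rk\widehat V=L-1$ enter --- and then passing the resulting matrix factorization cleanly through $j_*$. This is essentially the computation of \cite[\S5]{ADS}; the only new feature is that the globally defined kernel $\cE$ of \eqref{EBdef} takes the place of the local model $\cO_{q^{-1}(M)}$ used there, and by \eqref{isom} the two agree up to shift and twist over any open subset of $\P(U)$ on which $\widehat V$ trivialises, so the window bookkeeping of \cite{ADS} carries over verbatim.
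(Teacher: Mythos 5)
Your proof is correct and takes essentially the same approach as the paper: both resolve $\cO_N$ by its Koszul complex and push down to get a resolution of $\cE$ by bundles $\Wedge^a\widehat V^\dual\otimes\Sym^bS\otimes(\det S)^c$ with $b\le\tfrac12\rk\widehat V<L$, and then invoke the window machinery to handle the $Y_2$- and $\cO(m)$-directions. Where the paper defers the remainder to \cite[Proposition 5.9]{ADS} and \cite[\S A2.6]{Eisenbud}, you spell out the $R\pi_{2*}$ computation on $\P(S)$ and the Beilinson resolution on $\P(U)$ explicitly, which is a faithful expansion rather than a different route.
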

\begin{proof}
Recall that $\cE$ is obtained by pushing down the sheaf $\cO_N(\tfrac12 \rk \widehat{V})$. 
If we replace $\cO_N$ here by its Kozsul resolution, we can get a free resolution
of $\cE$ by bundles of the form 
$$\Wedge^a \widehat{V}^\dual\otimes \Sym^b S \otimes (\det S)^c; $$
see for example \cite[\S A2.6]{Eisenbud}. Furthermore, the symmetric powers of $S$ that occur lie in the range $b\le\tfrac12\rk \widehat{V}$. This is precisely the range of symmetric powers included in our set $\cT$ \eqref{T}, since $\rk \widehat{V}$ is either $n-3$ (if $n$ is odd) or $n-2$ (if $n$ is even).
The remainder of the argument is exactly the same as for \cite[Proposition 5.9]{ADS}.
\end{proof}

As mentioned in Remark \ref{orthogonal}, we believe that $F$ is actually an equivalence between $D^b(Y_2)$ and $\Br(X_2, W)$.

\bigskip \noindent {\tt{richard.thomas@imperial.ac.uk \\ edward.segal04@imperial.ac.uk}} \medskip

\noindent Department of Mathematics \\
\noindent Imperial College London\\
\noindent London SW7 2AZ \\
\noindent United Kingdom

\end{document}